\newtheorem{theorem}{Theorem}
\newtheorem{lemma}[theorem]{Lemma}
\theoremstyle{definition}
\newtheorem{definition}[theorem]{Definition}
\theoremstyle{remark}
\newtheorem{remark}[theorem]{Remark}
\newcommand*{\RR}{\ensuremath{\mathbb R}}
\newcommand*{\QQ}{\ensuremath{\mathbb Q}}
\newcommand*{\NN}{\ensuremath{\mathbb N}}
\newcommand*{\II}{\ensuremath{\mathcal I}}
\newcommand*{\DD}{\ensuremath{\mathcal D}}
\newcommand*{\LL}{\ensuremath{\mathfrak L}}
\title[Normal numbers from equidistributed sequences]{Construction of normal numbers \\ with respect to Generalized Lüroth Series \\ from equidistributed sequences}
\author{Max Aehle}
\address{Max Aehle \\ Max Planck Institute for Mathematics \\ Vivatsgasse~7 \\ D-53111~Bonn}
\curraddr{Faculty of Mathematics \\ Technische Universität Kaiserslautern \\ P.\,O.~Box~3049 \\ D-67653~Kaiserslautern}
\email{aehle@rhrk.uni-kl.de}
\author{Matthias Paulsen}
\address{Matthias Paulsen \\ Max Planck Institute for Mathematics \\ Vivatsgasse~7 \\ D-53111~Bonn}
\curraddr{Department Mathematisches Institut \\ Universität München \\ Theresienstr.~39 \\ D-80333~München}
\email{matthias.paulsen@campus.lmu.de}
\subjclass[2010]{Primary 11K16, Secondary 11A63}
\keywords{Normal numbers, Generalized Lüroth Series, Equidistributed sequences,
Numeration systems}
\begin{document}
\begin{abstract}
Generalized Lüroth series generalize $b$-adic representations as well as
Lüroth series.
Almost all real numbers are normal, but it is not easy to construct one.
In this paper, a new construction of normal numbers with respect to Generalized Lüroth Series (including those with an infinite digit set) is given. Our method concatenates the beginnings of the expansions of an arbitrary equidistributed sequence.
\end{abstract}

\maketitle

\section{Introduction}\label{sect:intro}
In 1909, Borel proved that the $b$-adic representations of almost all real numbers are normal for any integer base $b$. However, only a few simple examples for normal numbers are known, like the famous Champernowne constant \cite{champ}
\[ 0\,.\,1\,2\,3\,4\,5\,6\,7\,8\,9\,10\,11\,12\,13\,14\,15\,16\,17\,18\,19\,20\,\dots \;. \]
Other constants as $\pi$ or $\mathrm e$ are conjectured to be normal.
The concept of normality naturally extends to more general number expansions, including Generalized Lüroth Series \cite[pp.~41--50]{def-gls}. For example, the numbers $1/\mathrm e$ and $\frac12\sqrt3$ are not normal with respect to the classical Lüroth expansion that was introduced in \cite{lueroth}.

In his Bachelor thesis, Boks \cite{boks} proposed an algorithm that yields a normal number with respect to the Lüroth expansion, but he did not complete his proof of normality. Some years later, Madritsch and Mance \cite{mu-normal} transferred Champernowne's construction to any invariant probability measure. However, their method is rather complicated and does not reflect that digit sequences represent numbers. Vandehey \cite{vandehey} provided a much simpler construction, but for finite digit sets only, in particular, not for the original Lüroth expansion.

Like classical Lüroth expansions, continued fraction expansions are number representations with an infinite digit set. Adler, Keane and Smorodinsky \cite{cfrac} showed that the concatenation of the continued fractions of
\begin{equation}
\frac12,\;\frac13,\;\frac23,\;\frac14,\;\frac24,\;\frac34,\;\frac15,\;\frac25,\;\frac35,\;\frac45,\;\dots \label{eq:cfrac}
\end{equation}
leads to a normal number.

This motivates our approach for the construction of normal numbers with respect to Generalized Lüroth Series (GLS) from equidistributed sequences.
The idea is to look at the expansions of some equidistributed sequence with respect to a given GLS. In general, these expansions are infinite. Hence, in order to concatenate them, we have to trim each digit sequence to a certain length.

The paper is organized as follows: In Sections \ref{sect:equi} and \ref{sect:gls}
we recall the notions of equidistributed sequences and GLS respectively,
then in Section~\ref{sect:constr} we describe our construction, finally,
in Section~\ref{sect:proof} we prove that the constructed number is normal.

\section{Equidistributed Sequences}\label{sect:equi}
Recall that a sequence $(a_1,a_2,\dots)$ of numbers $a_j\in[0,1]$ is called equidistributed if for any interval $I\subseteq[0,1]$,
\begin{equation}
\lim_{n\to\infty}\frac{\#\{1\le j\le n:a_j\in I\}}n = |I| \;. \label{eq:def-equi}
\end{equation}
\begin{remark}\label{re:shift}
If $(a_1,a_2,\dots)$ is equidistributed, then $(a_k,a_{k+1},\dots)$ is
equidistributed for any $k\in\NN=\{1,2,\dots\}$.
\end{remark}
Many easy computable equidistributed sequences exist, e.\,g.
the sequence~\eqref{eq:cfrac} used by \cite{cfrac} and the sequence $(n\beta\mod1)_{n\in\NN}$ for any $\beta\in\RR\setminus\QQ$ \cite{uniform}.

It is convenient to introduce the concept of uniform equidistribution:
\begin{definition}\label{def:uniequi}
A sequence $(a_1,a_2,\dots)$ of numbers $a_j\in[0,1]$ is called \emph{uniformly equidistributed} if the convergence in equation~\eqref{eq:def-equi} is uniform in $I$, i.\,e. for all $\varepsilon>0$ there is an $N\in\NN$ such that for all
$n\ge N$ and any interval $I\subseteq[0,1]$,
\[ \left|\frac{\#\{1\le j\le n:a_j\in I\}}n - |I|\right| < \varepsilon \;. \]
\end{definition}
As it turns out, these two terms are equivalent.
This fact is commonly used without proving it. For clarity, we will sketch a simple proof here.
\begin{lemma}\label{lem:uniequi-equi}
Every equidistributed sequence is uniformly equidistributed.
\end{lemma}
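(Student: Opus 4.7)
The plan is to turn pointwise convergence into uniform convergence by reducing all intervals to a fixed finite grid. Given $\varepsilon>0$, I would fix an integer $m$ with $1/m<\varepsilon/3$ and set $J_k=[k/m,(k+1)/m]$ for $k=0,\dots,m-1$.

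First, I would apply the equidistribution hypothesis~\eqref{eq:def-equi} separately to each of the finitely many cells $J_k$; combining the individual thresholds $N_k$ gives a single $N\in\NN$ such that
\[
  \left|\frac{\#\{1\le j\le n:a_j\in J_k\}}{n}-\frac{1}{m}\right|<\frac{\varepsilon}{3m}
\]
holds for every $n\ge N$ and every $k$. Second, for an arbitrary interval $I\subseteq[0,1]$, I would form the inner and outer grid approximations
\[
  I^-=\bigcup_{J_k\subseteq I}J_k,\qquad I^+=\bigcup_{J_k\cap I\neq\emptyset}J_k,
\]
so that $I^-\subseteq I\subseteq I^+$ and $|I^+|-|I^-|\le 2/m<2\varepsilon/3$, since at most two cells $J_k$ can straddle an endpoint of $I$. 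Third, I would use additivity of the counting measure and sum the per-cell bound over the at most $m$ cells constituting $I^\pm$ to obtain
\[
  \left|\frac{\#\{1\le j\le n:a_j\in I^\pm\}}{n}-|I^\pm|\right|<\frac{\varepsilon}{3}
\]
for $n\ge N$. Combining this with the sandwich $I^-\subseteq I\subseteq I^+$ and the length estimate $\bigl||I^\pm|-|I|\bigr|<2\varepsilon/3$ then yields the uniform bound demanded by Definition~\ref{def:uniequi}.

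There is no deep obstacle here; the argument is essentially bookkeeping. The only subtlety is that the per-cell tolerance must shrink with the mesh like $\varepsilon/(3m)$ rather than being an absolute constant, so that the errors still telescope to at most $\varepsilon/3$ when summed over all $m$ cells in $I^\pm$. This is affordable precisely because the partition is finite, which is the entire point of reducing to a finite grid in the first place.
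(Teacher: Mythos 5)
Your argument is essentially identical to the paper's proof: both reduce to a finite grid of mesh $<\varepsilon/3$, apply~\eqref{eq:def-equi} to each of the finitely many cells with tolerance $\varepsilon/(3m)$, and sandwich an arbitrary interval between inner and outer unions of cells that differ by at most two cells in total length. The only detail to fix is that your cells $J_k=[k/m,(k+1)/m]$ overlap at their endpoints, so the counts are not additive as written; take them half-open (as the paper does with its $E_i$) and the bookkeeping goes through verbatim.
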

\begin{proof}
Consider an equidistributed sequence $(a_1,a_2,\dots)$. Let $\varepsilon>0$.
Choose $k\in\NN$ such that $\frac1k<\varepsilon/3$. Set $E_i=\left[\frac{i-1}k,\frac ik\right)$ for $i=1,\dots,k-1$ and $E_k=\left[\frac{k-1}k,1\right]$.
For each $E_i$ there is an $N_i\in\NN$ with
\[ \left|\frac{\#\{1\le j\le n:a_j\in E_i\}}n - |E_i|\right| < \frac\varepsilon{3k} \]
for all $n\ge N_i$. Let $N=\max\{N_1,\dots,N_k\}$. For a given interval $I\subseteq[0,1]$,
consider the intervals $E_s,\dots,E_t$ ($1\le s\le t\le k$) which have a
non-empty intersection with $I$. Take an arbitrary integer $n\ge N$. We get
\begin{align*}
\frac{\#\{1\le j\le n:a_j\in I\}}n-|I| &\le \frac{\#\{1\le j\le n:a_j\in\bigcup_{i=s}^t E_i\}}n - \left|\bigcup_{i=s+1}^{t-1}E_i\right| \\
&\le \sum_{i=s}^t\left(\frac{\#\{1\le j\le n:a_j\in E_i\}}n-|E_i|\right)+|E_s|+|E_t| \\
&\le \sum_{i=1}^k\left|\frac{\#\{1\le j\le n:a_j\in E_i\}}n-|E_i|\right|+\frac2k \\
&< k\cdot\frac\varepsilon{3k}+2\varepsilon/3 = \varepsilon \;.
\end{align*}
Similarly, $-\varepsilon$ is a lower bound.
\end{proof}
We are interested in maps which preserve the equidistribution property of sequences.
\begin{definition}\label{def:equipres}
A map $T\colon[0,1]\to[0,1]$ is called \emph{equidistribution-preserving} if for every
equidistributed sequence $(a_1,a_2,\dots)$ the sequence $(Ta_1,Ta_2,\dots)$ is equidistributed.
\end{definition}

\section{Generalized Lüroth Series}\label{sect:gls}
Based on \cite[pp.~41--50]{def-gls}, we precise our notion of Generalized Lüroth Series.
\begin{definition}\label{def:gls}
Let $\DD\subseteq\NN$ be a non-empty set (called ``digit set''). Suppose that for each
$d\in\DD$ there is an interval $\II_d\subseteq[0,1]$ and a map $T_d\colon\II_d\to[0,1]$
with the following properties:
\begin{enumerate}
\item\label{it:union} $\displaystyle\bigcup_{d\in\DD}\II_d=[0,1]$;
\item\label{it:disjoint} the intervals $\II_d$ are pairwise disjoint;
\item if $|\II_d|>0$, then $T_d$ is either of the form
\[ T_d(x) = \frac1{|\II_d|}(x-\inf\II_d) \qquad\text{or}\qquad T_d(x) = 1-\frac1{|\II_d|}(x-\inf\II_d) \;. \]
\end{enumerate}
Define $T\colon[0,1]\to[0,1]$ by $T(x)=T_d(x)$ for $x\in\II_d$.

The tuple $\LL=(\DD,\{\II_d:d\in\DD\},T)$ is called a \emph{Generalized Lüroth Series (GLS)}. For $j\in\NN_0$ we denote the $j$-th digit of $x\in[0,1]$ with respect to $\LL$ by $\LL_j(x)$, i.\,e. the $d\in\DD$ such that $T^jx\in\II_d$.
We write $\LL_j^\ell(x)=(\LL_j(x),\LL_{j+1}(x),\dots,\LL_{j+\ell-1}(x))$.
\end{definition}
Indeed, the transformation of a GLS possesses the desired property.
\begin{lemma}\label{lem:gls-equipres}
Let $\LL=(\DD,\{\II_d:d\in\DD\},T)$ be a GLS. Then $T$ is equidistribution-preserving.
\end{lemma}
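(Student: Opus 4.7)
The plan is to show that for any interval $I\subseteq[0,1]$ and any $\varepsilon>0$, the frequency $\#\{1\le j\le n:Ta_j\in I\}/n$ lies within $\varepsilon$ of $|I|$ for all sufficiently large $n$. The starting observation is the disjoint decomposition
\[
\{1\le j\le n:Ta_j\in I\} = \bigsqcup_{d\in\DD}\{1\le j\le n:a_j\in T_d^{-1}(I)\cap\II_d\}.
\]
Condition~(3) of Definition~\ref{def:gls} guarantees that for each $d$ with $|\II_d|>0$, the preimage $T_d^{-1}(I)$ is an interval contained in $\II_d$ of length exactly $|I|\cdot|\II_d|$, so summing gives the target total $|I|\sum_{d\in\DD}|\II_d|=|I|$.

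The main obstacle is that $\DD$ may be infinite, so uniform equidistribution (Lemma~\ref{lem:uniequi-equi}) cannot be applied to all preimages at once. My remedy is the usual head-tail split. First I would choose a finite $F\subseteq\DD$ with $\sum_{d\in F}|\II_d|>1-\varepsilon/3$; then $U=[0,1]\setminus\bigcup_{d\in F}\II_d$ is a finite union of (at most $|F|+1$) intervals of total length less than $\varepsilon/3$. Using Lemma~\ref{lem:uniequi-equi}, I would pick $N$ so that for all $n\ge N$ and every interval $J\subseteq[0,1]$,
\[
\left|\frac{\#\{1\le j\le n:a_j\in J\}}n-|J|\right|<\delta,
\]
where $\delta=\varepsilon/(6(|F|+1))$ is small enough that applying the bound to the $|F|$ head intervals and the $\le|F|+1$ components of $U$ contributes only $O(\varepsilon)$ error.

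For the head, summing over $d\in F$ gives a value within $|F|\delta\le\varepsilon/6$ of $|I|\sum_{d\in F}|\II_d|$, which itself lies in $[|I|(1-\varepsilon/3),|I|]$. For the tail, I would use the crude bound
\[
\sum_{d\in\DD\setminus F}\#\{1\le j\le n:a_j\in T_d^{-1}(I)\cap\II_d\}\le\#\{1\le j\le n:a_j\in U\},
\]
which by uniform equidistribution applied to each component of $U$ is at most $(|U|+(|F|+1)\delta)\cdot n<\varepsilon\cdot n/2$. Adding head and tail shows that $\#\{1\le j\le n:Ta_j\in I\}/n$ lies in $(|I|-\varepsilon,|I|+\varepsilon)$, as required.

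One small subtlety to address is the digits $d$ with $|\II_d|=0$, for which $T_d$ is unrestricted by Definition~\ref{def:gls}. These are automatically absorbed by the tail since the corresponding points lie in $U$, so no separate argument is needed. Apart from the bookkeeping of constants, this is the entire proof.
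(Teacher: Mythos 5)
Your proposal is correct and follows essentially the same route as the paper's proof: both reduce to uniform equidistribution via Lemma~\ref{lem:uniequi-equi}, decompose $T^{-1}I$ over the cylinders $\II_d$, split off a finite head $F$ with $\sum_{d\in F}|\II_d|$ close to $1$, and absorb the remaining digits into the finitely many complementary intervals. The only (harmless) cosmetic difference is that you compute $|T_d^{-1}(I)\cap\II_d|=|I|\cdot|\II_d|$ directly from the affine form of $T_d$, where the paper instead invokes measure preservation of $T$ to sandwich $|I|$ between $\sum_i|J_i|$ and $\sum_i|J_i|+\sum_i|H_i|$.
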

\begin{proof}
Let $(a_1,a_2,\dots)$ be a uniformly equidistributed sequence (by Lemma~\ref{lem:uniequi-equi}). Let $I\subseteq[0,1]$ and $\varepsilon>0$ be arbitrary. Because of properties (\ref{it:union}) and (\ref{it:disjoint}) of Definition~\ref{def:gls} we know $\sum_{d\in\DD}|\II_d|=1$,
i.\,e. there are intervals $\II_{d_1},\dots,\II_{d_k}$ with
\begin{equation}
\sum_{i=1}^k|\II_{d_i}|>1-\varepsilon/2 \;. \label{eq:1-eps/2}
\end{equation}
The set $[0,1]\setminus\bigcup_{i=1}^k\II_{d_i}$ is a union of pairwise disjoint intervals $H_0,\dots,H_{k'}$ with $k'\le k$. Let $J_i=(T^{-1}I)\cap\II_{d_i}$ for $i=1,\dots,k$. Definition~\ref{def:gls} implies that $J_1,\dots,J_k$ are intervals.
It is a well known fact that $T$ preserves the
Lebesgue measure. Hence
\begin{equation}
\sum_{i=1}^k|J_i| \le |I| \le \sum_{i=1}^k|J_i|+\sum_{i=0}^{k'}|H_i| \;. \label{eq:|I|}
\end{equation}
Using Definition~\ref{def:uniequi}, there exists an integer $N\in\NN$ such that for
all $n\ge N$ we have
\begin{equation}
\left|\frac{\#\{1\le j\le n:a_j\in X\}}n-|X|\right|<\frac\varepsilon{4k+2} \;, \label{eq:eps/4k+2}
\end{equation}
whereas $X\in\{J_1,\dots,J_k,H_0,\dots,H_{k'}\}$.
For the upper bound we get for every $n\ge N$
\begin{align*}
 \frac{\#\{1\le j\le n:Ta_j\in I\}}n-|I|
&\le \sum_{i=1}^k\frac{\#\{1\le j\le n:a_j\in J_i\}}n \\*
&\phantom{{}\le{}} \qquad {}+\sum_{i=0}^{k'}\frac{\#\{1\le j\le n:a_j\in H_i\}}n-|I| \\
&\overset{\eqref{eq:|I|}}\le \sum_{i=1}^k\left(\frac{\#\{1\le j\le n:a_j\in J_i\}}n-|J_i|\right)+\sum_{i=0}^{k'}|H_i| \\*
&\phantom{{}\le{}} \qquad {}+\sum_{i=0}^{k'}\left(\frac{\#\{1\le j\le n:a_j\in H_i\}}n-|H_i|\right) \\
&\overset{\mathclap{\eqref{eq:1-eps/2},\eqref{eq:eps/4k+2}}}< k\cdot\frac\varepsilon{4k+2}+\varepsilon/2+(k'+1)\cdot\frac\varepsilon{4k+2} \le \varepsilon \;.
\intertext{For the lower bound we similarly obtain}
|I|-\frac{\#\{1\le j\le n:Ta_j\in I\}}n
&\le |I|-\sum_{i=1}^k\frac{\#\{1\le j\le n:a_j\in J_i\}}n \\
&\overset{\eqref{eq:|I|}}\le \sum_{i=0}^{k'}|H_i|+\sum_{i=1}^k\left(|J_i|-\frac{\#\{1\le j\le n:a_j\in J_i\}}n\right) \\
&\overset{\mathclap{\eqref{eq:1-eps/2},\eqref{eq:eps/4k+2}}}< \varepsilon/2+k\cdot\frac\varepsilon{4k+2}<\varepsilon \;. \qedhere
\end{align*}
\end{proof}

With respect to a GLS $\LL$, a number $x\in[0,1]$ is normal if for any
block $b=(b_1,\dots,b_r)\in\DD^r$, we have
\begin{equation}
\lim_{n\to\infty}\frac{\#\{0\le j\le n-1:\LL_j^r(x)=b\}}n = \prod_{i=1}^r|\II_{b_i}| \;. \label{eq:def-normal}
\end{equation}

\section{Construction}\label{sect:constr}
Fix a GLS $\LL=(\DD,\{\II_d:d\in\DD\},T)$ and an equidistributed sequence
$(a_1,a_2,\dots)$. We recursively define a strictly increasing
sequence $(c_0,c_1,c_2,\dots)$ of indices as follows. Let $c_0=0$.
Assume $c_0,\dots,c_\ell$ are already chosen for some $\ell\in\NN_0$.
For $i=0,\dots,\ell$ the sequences $(T^ia_{c_i+1},T^ia_{c_i+2},\dots)$ are uniformly equidistributed by Remark~\ref{re:shift},
Lemma~\ref{lem:uniequi-equi} and Lemma~\ref{lem:gls-equipres}. Therefore we can choose a $c_{\ell+1}>c_\ell$ such that for all integers $n\ge c_{\ell+1}$ and $0\le i\le\ell$ and any interval $I\subseteq[0,1]$ we have
\begin{equation}
\left|\frac{\#\{c_i+1\le j\le n:T^ia_j\in I\}}{n-c_i}-|I|\right| < \frac1{\ell+1} \;. \label{eq:c}
\end{equation}
Finally, we define $z\in[0,1]$ such that its expansion with respect to $\LL$
is the concatenation of
$\LL_0^{l(1)}(a_1)$, $\LL_0^{l(2)}(a_2)$, $\LL_0^{l(3)}(a_3)$, etc., whereas
$l(j)$ is chosen such that $c_{l(j)-1}+1\le j\le c_{l(j)}$.
In particular, we have $l(j)\le j$ as well as $l(j)\to\infty$ for $j\to\infty$.

\section{Proof of Normality}\label{sect:proof}
\begin{figure}
\centering\includegraphics[width=\linewidth]{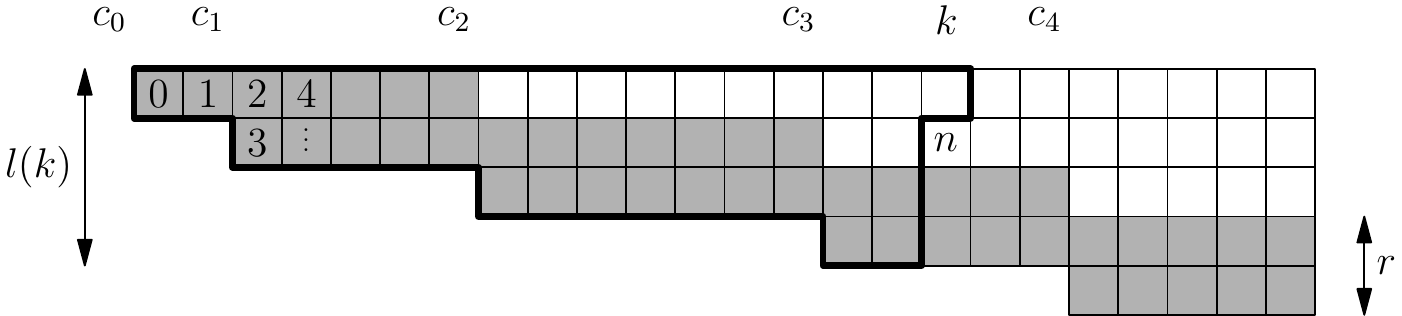}
\caption{Visualization of the construction of $z$. The $m$-th digit of $z$ in the $i$-th row ($i=0,1,\dots$) and $j$-th column ($j=1,2,\dots$) is $\LL_m(z)=\LL_i(a_j)$.}\label{fig:quad}
\end{figure}
\begin{theorem}
The number $z$ is normal with respect to $\LL$.
\end{theorem}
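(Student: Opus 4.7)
The plan is to fix an arbitrary block $b=(b_1,\dots,b_r)\in\DD^r$ and to show that the relative frequency of $b$ among the first $n$ digits of $z$ converges to the Lebesgue measure of the cylinder set $C(b):=\{x\in[0,1]:\LL_0^r(x)=b\}$. A short induction on $r$, using that each $\II_d$ is an interval and that $T$ restricts affinely to it, shows that $C(b)$ is itself an interval with $|C(b)|=\prod_{i=1}^r|\II_{b_i}|$. This is precisely what allows \eqref{eq:c} to be applied with $I=C(b)$.

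Following Figure~\ref{fig:quad}, I would classify occurrences of $b$ as either lying entirely within a single column of the diagram or straddling at least one column boundary. An internal occurrence at row $i$ of column $j$ exists if and only if $l(j)\ge i+r$ and $T^ia_j\in C(b)$. Boundary occurrences are harmless, because each column boundary admits at most $r-1$ such blocks and the number of boundaries inside the first $n$ digits is $o(n)$ as $l(j)\to\infty$.

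For the main term I would first evaluate at the special values $n=S_\ell:=\sum_{j=1}^{c_\ell}l(j)$. Exchanging the order of summation and using $l(j)\ge i+r\iff j\ge c_{i+r-1}+1$, the internal count equals $\sum_{i=0}^{\ell-r}\#\{c_{i+r-1}+1\le j\le c_\ell:T^ia_j\in C(b)\}$. Applying \eqref{eq:c} to each row $0\le i\le\ell-1$ with parameters $n=c_\ell$ and $\ell'=\ell-1$ gives
\[ \#\{c_i+1\le j\le c_\ell:T^ia_j\in C(b)\}=(c_\ell-c_i)\,|C(b)|+\varepsilon_i,\qquad|\varepsilon_i|\le\frac{c_\ell-c_i}{\ell}. \]
Summing over $i$ and invoking the telescoping identity $\sum_{i=0}^{\ell-1}(c_\ell-c_i)=S_\ell$ collapses the sum to the main term $|C(b)|\cdot S_\ell$ with total equidistribution error at most $S_\ell/\ell$. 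Replacing the lower summation bound $c_i+1$ by $c_{i+r-1}+1$ and discarding the top rows $i>\ell-r$ introduce only a further error of order $O(c_\ell)$.

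The last ingredient is $c_\ell/S_\ell\to0$, which follows from $l(j)\to\infty$: for any $M\in\NN$ only finitely many columns satisfy $l(j)<M$, whence $S_\ell\ge M(c_\ell-c_M)$ eventually and $\limsup_{\ell\to\infty}c_\ell/S_\ell\le1/M$. Combined with $1/\ell\to0$, this yields normality along the subsequence $n=S_\ell$; the passage to general $n$ is a standard squeeze using that the partial final column has length at most $l(J(n))+1$, bounded by $m+1$ where $m=l(J(n))\to\infty$, while $n\ge S_{m-1}\ge m(m-1)/2$ thanks to $c_k-c_{k-1}\ge1$. The main obstacle I anticipate is not any one step but the combined bookkeeping of the four error sources—boundary crossings, top-row truncation, the shift of the lower summation bound, and the partial final column—all of which must be shown to be $o(S_\ell)$; the construction of $c_\ell$ via \eqref{eq:c} is tuned so that the dominant equidistribution error $S_\ell/\ell$ is $o(S_\ell)$, while the remaining errors are $O(c_\ell)=o(S_\ell)$.
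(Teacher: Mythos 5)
Your overall strategy---counting occurrences column by column, converting the column sums into row sums, applying \eqref{eq:c} row by row, and absorbing the boundary-crossing and truncation errors into $O(c_\ell)$ terms---is essentially the paper's, and most of the details check out: the identification of the cylinder set $C(b)$ as an interval of length $\prod_{i=1}^r|\II_{b_i}|$, the telescoping identity $\sum_{i=0}^{\ell-1}(c_\ell-c_i)=S_\ell$, the bound of $r-1$ straddling blocks per column boundary, and the estimate $c_\ell/S_\ell\to0$ are all correct. The genuine gap is the very last step, the passage from the subsequence $n=S_\ell$ to arbitrary $n$. The values $S_\ell=\sum_{j=1}^{c_\ell}l(j)$ are extremely sparse: $S_{\ell+1}-S_\ell=(\ell+1)(c_{\ell+1}-c_\ell)$, and since $c_{\ell+1}$ is chosen only after (and possibly enormously larger than) $c_\ell$, the ratio $S_{\ell+1}/S_\ell$ is unbounded. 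Hence for $S_\ell<n<S_{\ell+1}$ the first $n$ digits may be dominated by the \emph{complete} new columns $c_\ell+1,\dots,k-1$ (where $k$ is the column containing digit $n-1$), whose contribution is in no way controlled by the frequencies at $n=S_\ell$ and $n=S_{\ell+1}$. Your error accounting for general $n$ addresses only the single partial column $k$ (correctly, via $n\ge S_{m-1}\ge m(m-1)/2$), which is the harmless part; the dangerous part is the up to $c_{\ell+1}-c_\ell-1$ full columns between $c_\ell$ and $k$, and a squeeze between consecutive special values cannot handle it.

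The repair is to abandon the subsequence and run your main-term computation directly with the general column endpoint $k$ in place of $c_\ell$. This costs nothing: inequality \eqref{eq:c}, read with parameter $\ell-1$, holds for \emph{every} integer $n\ge c_\ell$, hence in particular for $n=k$, so each row count $\#\{c_i+1\le j\le k:T^ia_j\in C(b)\}$ is within $(k-c_i)/\ell$ of $(k-c_i)\,|C(b)|$ for all rows $0\le i\le\ell-1$. Your telescoping identity is then replaced by the two-sided comparison of $n$ with $\sum_i(k-c_i)$ given in \eqref{eq:n-le} and \eqref{eq:n-ge}, and the remaining errors are $O(k)=o(n)$, where $k/n\to0$ follows from $l(j)\to\infty$ by the same Ces\`aro argument you use for $c_\ell/S_\ell\to0$. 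This is exactly how the paper's proof proceeds; with that modification the rest of your bookkeeping goes through.
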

\begin{proof}
Let $b=(b_1,\dots,b_r)\in\DD^r$ be a block of digits.
Define $I\subseteq[0,1]$ as the set of all $x\in[0,1]$ which fulfill $\LL_0^r(x)=b$. It follows that $I$ is an interval of length $\prod_{i=1}^r|\II_{b_i}|$.
We want to verify equation~\eqref{eq:def-normal}.
Fix an arbitrary $n\in\NN$.
Assume that the $(n-1)$-th digit of $z$ (i.\,e. $\LL_{n-1}(z)$) occurs within
$\LL_0^{l(k)}(a_k)$ for some $k\in\NN$.
Obviously, the number of digit positions $m\in\{0,\dots,n-1\}$ which belong
to an $\LL_i(a_j)$ for a certain $i=0,\dots,l(k)-1$ is bounded above
by $k-c_i$, thus
\begin{equation}
n \le \sum_{i=0}^{l(k)-1}(k-c_i) \le \sum_{i=0}^{l(k)-2}(k-c_i)+k \;. \label{eq:n-le}
\end{equation}
Similarly, it is bounded below by $k-1-c_i$, hence
\begin{equation}
n \ge \sum_{i=0}^{l(k)-1}(k-1-c_i) \ge \sum_{i=0}^{l(k)-2}(k-c_i-1) \label{eq:n-ge} \;.
\end{equation}
Counting the number of occurrences of $b$ in each column separately (see Figure~\ref{fig:quad}), we obtain
\begin{align*}
&\phantom{{}={}} \frac{\#\{0\le j\le n-1:\LL_j^r(z)=b\}}n-|I| \\
&\le \sum_{j=1}^k\frac{\#\{0\le i\le l(j)-r-1:\LL_i^r(a_j)=b\}+r}n-|I| \\
&\le \sum_{j=1}^k\frac{\#\{0\le i\le l(j)-2:T^ia_j\in I\}}n-|I|+\frac kn\cdot r \\
&= \sum_{i=0}^{l(k)-2}\frac{\#\{c_i+1\le j\le k:T^ia_j\in I\}}n-|I|+\frac kn\cdot r \\
&\overset{\eqref{eq:n-ge}}\le \sum_{i=0}^{l(k)-2}\frac{k-c_i}n\left(\frac{\#\{c_i+1\le j\le k:T^ia_j\in I\}}{k-c_i}-|I|\right)+\frac{l(k)-1}n\cdot|I|+\frac kn\cdot r \\
&\overset{\eqref{eq:c}}\le \sum_{i=0}^{l(k)-2}\frac{k-c_i}n\cdot\frac1{l(k)-1}+\frac{l(k)-1}n\cdot|I|+\frac kn\cdot r \\
&\overset{\eqref{eq:n-ge}}\le \frac{n+l(k)-1}{n(l(k)-1)}+\frac{l(k)-1}n\cdot|I|+\frac kn\cdot r \le \frac1{l(k)-1}+\frac1n+\frac kn\cdot(|I|+r) \;.
\intertext{In a similar manner, we establish a lower bound:}
& \phantom{{}={}} |I|-\frac{\#\{0\le j\le n-1:\LL_j^r(z)=b\}}n \\
&\le |I|-\sum_{i=0}^{l(k)-2}\frac{\#\{c_i+1\le j\le k-1:T^ia_j\in I\}}n+\frac{k-1}n\cdot r \\
&\overset{\eqref{eq:n-le}}\le \sum_{i=0}^{l(k)-2}\frac{k-c_i-1}n\left(|I|-\frac{\#\{c_i+1\le j\le k-1:T^ia_j\in I\}}{k-c_i-1}\right) \\*
&\phantom{{}\le{}} \qquad {}+\frac{l(k)-1+k}n\cdot|I|+\frac{k-1}n\cdot r \\
&\overset{\mathclap{\eqref{eq:c},\eqref{eq:n-ge}}}\le \frac1{l(k)-1}+\frac{l(k)-1+k}n\cdot|I|+\frac{k-1}n\cdot r \le \frac1{l(k)-1}+\frac kn\cdot(2|I|+r) \;.
\end{align*}
For $n\to\infty$, we have $k\to\infty$ and thus $l(k)\to\infty$. Using the Cauchy limit theorem, we conclude
\[ \frac nk\ge\frac{0+l(1)+l(2)+\dots+l(k-1)}k\to\infty \;. \]
It follows that
\[ \lim_{n\to\infty}\left|\frac{\#\{0\le j\le n-1:\LL_j^r(z)=b\}}n-|I|\right|=0 \;. \]
This completes the proof.
\end{proof}

\section{Open Problems}\label{sect:open}
It would be more straightforward and closer to \cite{cfrac}, if the trimming of the expansions could be omitted by using equidistributed sequences of rational numbers with a finite expansion. Lüroth himself proved that rational numbers always have a finite or periodic Lüroth expansion \cite{lueroth}, but he did not find a way to distinguish between these two possibilities. Some algebraic manipulation shows that fractions of the form $\frac a{2^k}$ (i.\,e. the dyadic rationals) have a finite Lüroth expansion. We conjecture that this also applies to fractions of the form $\frac a{3^k}$. It would be very interesting to get a full understanding of rationals with a finite Lüroth expansion.

\section*{Acknowledgment}
This paper was written during an internship at the
Max~Planck~Institute for Mathematics in Bonn. We would like to thank the
MPIM for offering us this unique opportunity to conduct active mathematical
research. We are very grateful to our adviser, Izabela Petrykiewicz, for her
helpful suggestions and support.

\nocite*
\bibliographystyle{amsalpha}
\bibliography{references}
\end{document}